\numberwithin{equation}{section}
\newtheorem{lemma}{Lemma}
\newtheorem{theorem}{Theorem}[section]
\theoremstyle{definition}
\newtheorem{Ass}{Assumption}
\newtheorem{remark}{Remark}[section]
\newcommand{\xd}{\textrm{d}}
\newcommand{\R}{\mathbb{R}}
\newcommand{\diff}{\mathop{}\!\mathrm{d}}
\title[Rate of convergence of Yosida approximation for the nonlocal Cahn-Hilliard]{On the rate of convergence of Yosida approximation for the nonlocal Cahn-Hilliard equation}
\author[Piotr Gwiazda]{Piotr Gwiazda}
\address{{\it Piotr Gwiazda:} Institute of Mathematics, Polish Academy of Sciences, Warsaw, Poland}
\email{pgwiazda@mimuw.edu.pl}
\author[Jakub Skrzeczkowski]{Jakub Skrzeczkowski}
\address{{\it Jakub Skrzeczkowski:} Mathematical Institute, University of Oxford, Oxford, United Kingdom;  
Faculty of Mathematics, Informatics and Mechanics, University of Warsaw, Poland}
\email{jakub.skrzeczkowski@maths.ox.ac.uk}
\author[Lara Trussardi]{Lara Trussardi}
\address{{\it Lara Trussardi: }Department of Mathematics and Statistics, University of Konstanz, Universitätsstraße 10
78464 Konstanz, Germany -- Institute of Mathematics and Scientific Computing, University of Graz, Heinrichstraße 36, 8010 Graz (Austria)}
\email{lara.trussardi@uni-graz.at}
\keywords{Nonlocal Cahn-Hilliard equation, singular potential, Yosida approximation, Moreau-Yosida approximation, Hilbert-Schmidt integral operator, rate of convergence.}
\subjclass[2010]{45K05, 35K25, 35K55, 65M12.}
\begin{document}

\begin{abstract}
It is well-known that one can construct solutions to the nonlocal Cahn-Hilliard equation with singular potentials via Yosida approximation with parameter $\lambda 
\to 0$. The usual method is based on compactness arguments and does not provide any rate of convergence. Here, we fill the gap and we obtain an explicit convergence rate $\sqrt{\lambda}$. The proof is based on the theory of maximal monotone operators and an observation that the nonlocal operator is of Hilbert-Schmidt type. Our estimate can provide convergence result for the Galerkin methods where the parameter $\lambda$ could be linked to the discretization parameters, yielding appropriate error estimates. 
\end{abstract}

\maketitle

\tableofcontents

\section{Introduction}
\label{sec:intro}

The nonlocal version of the Cahn-Hilliard equation was introduced in the early $90$'s by G. Giacomin and J. Lebowitz~\cite{GL}. They considered the hydrodynamic limit of a microscopic model describing a $d$-dimensional lattice gas evolving via a Poisson nearest-neighbor process and derived a nonlocal energy functional of the form
\begin{align} \label{eq:energy}
E(u)&=\frac{1}{4}\int_{\Omega}\int_{\Omega}J(x-y)(u(x)-u(y))^{2} \diff x \diff y+ \int_{\Omega}F(u(x)) \diff x,
\end{align}
where $\Omega$ is a smooth bounded domain in $\R^d$, $J$ is a nonnegative and symmetric convolution kernel, and $F$ is a double-well potential.\\

The associated evolution problem is related to the gradient flow (in the $H^{-1}$-metric) and provides a nonlocal variant of the Cahn-Hilliard equation, given by the following system of equations:
\begin{align}\label{eq:NL}
\begin{aligned}
\partial_t u&-{\rm div}\,(m(u) \nabla \mu) =0\text{,}\\ 
\mu & =\frac{\delta E(u)}{\delta u}=a(x) u-J* u+ F'(u) \text{,}
\end{aligned}
\end{align}
equipped with appropriate boundary conditions, where $a(x)=(J*1)(x):=\int_{\Omega}J(x-y)\xd y$,  $(J*u)(x):=\int_\Omega J(x-y)u(y)\, \xd y$, for $x\in \Omega$. This equation describes the evolution of the concentration of two components in a binary fluid. The local concentration of one of the two components is represented by a real valued function $u = u(x)$, the function $\mu$ is the chemical potential, $F$ is a double-well potential, and $m$ is the mobility. In our case, we deal with the so-called nondegenerate Cahn-Hilliard equation, that is the mobility $m(u)$ is bounded away from $0$, and without loss of generality, we assume $m(u)=1$.  Moreover, the potential $F$ can be splitted into two parts
\begin{equation}\label{eq:splitting_potential_intro}
F(u) = \widehat{\gamma}(u) + \widehat{\Pi}(u), 
\end{equation}
where $\widehat{\gamma}$ is a convex and may be singular while $\widehat{\Pi}$ is a regular, small perturbation.\\

In~\cite{GL} the authors derived a free energy functional in nonlocal form, and proposed the corresponding gradient flow to model the phase-change in binary alloys. 
The mathematical literature on the nonlocal
Cahn-Hilliard equation is widely developed: among many others, we mention~\cite{ab-bos-grass-NLCH,bat-han-NLCH,gal-gior-grass-NLCH,gal-grass-NLCH,han-NLCH} and the references therein.
Concerning the existence of solution for the nonlocal Cahn-Hilliard equation, the first proof for singular kernels not falling within the $W^{1,1}$-existence theory and under possible degeneracy of the double-well potential was done in~\cite{DRST,DST}. These were also the first contributions dealing with the case of non-regular interaction kernels.
In~\cite{DST2} the authors provided a full characterization of existence in the case of singular double-well potential, and interaction kernels satisfying $W^{1,1}$ integrability assumptions. In~\cite{DRST,DST,DST2}, the authors worked under the assumption of constant mobility and consider a Yosida-type regularization on the nonlinearity. \\

As already mentioned, in this paper, we focus on the nondegenerate Cahn-Hilliard equation. This assumption is a mathematical simplification, allowing to study the Cahn-Hilliard equation in greater detail. In particular, there are much more results available for the nondegenerate equation, including well-posedness results \cite{MR4001523} or strict separation property \cite{poiatti20233d,GGGsep}. Nevertheless, it seems that it is the {\it degenerate} equation that is physically relevant as it results from several different limits, including hydrodynamic limit of Vlasov-type equation \cite{elbar-mason-perthame-skrzeczkowski}, interacting particle systems \cite{GL} or high-friction limit for the Euler-Korteweg equation \cite{gallenmuller2023cahn,elbar2023nonlocal,MR3615546}.  We also want to remark that there are few analytical results available for the degenerate equation \cite{MR3448925,MR4241616, elbar-skrzeczkowski, carrillo2023degenerate, elbar2021degenerate, MR4199231,elbar2023limit,carrillo2023competing} and that these studies were initiated by the paper of Elliott and Garcke \cite{MR1377481}.  In this paper we focus on the case with constant mobility since the nondegeneracy assumption is necessary to obtain an $L^2$ estimate on the chemical potential $\mu$ which is an essential part of our argument. \\

The standard methods of construction of solutions to \eqref{eq:NL} with singular potentials are based on compactness of solutions to the approximate system where the potential is regularized via Yosida approximation (see Section~\ref{sect:max_monotone_graphs} for the relevant definitions). Namely, the singular potential $\widehat{\gamma}$ is regularized via Yosida approximation $\widehat{\gamma_{\lambda}}$ and then one sends $\lambda \to 0$ based on a priori estimates. In this paper, we obtain an explicit estimate in $L^2((0,T)\times\Omega)$ which quantifies this convergence, see Theorem~\ref{thm:main} for more details. To our knowledge, this is the first result of this type. Such estimate could provide convergence of Galerkin methods for \eqref{eq:NL} when one applies Yosida approximation to regularize the potential and the Galerkin discretization in space. \\

We recall that the Yosida approximations of a function $F$ is defined as  $F_\lambda: \R \rightarrow \R$, having Lipschitz constant of order $1/\lambda$, and $\widehat{F}_\lambda(s) :=\int_0^s F_\lambda(r) \diff r$ for every $s \in\R$.
Our method consists in the following: we consider a sequence $\{u_{\lambda}\}$ satisfying 
\eqref{eq:approx_problem} given by
\begin{equation}\label{eq:approx_problem_intro}
\begin{split}
    \partial_t u_{\lambda} &= \Delta \mu_{\lambda},\mbox{ in }  \Omega\\
    \mu_{\lambda} &= B(u_{\lambda}) + \gamma_{\lambda}(u_{\lambda}) + \Pi(u_{\lambda}),\mbox{ in }  \Omega\\
    \frac{\partial \mu_{\lambda}}{\partial \textbf{n}} &= 0, \mbox{ on } \partial \Omega
\end{split}
\end{equation}
where $B(u_\lambda):=a(x)\,u_\lambda- J*u_\lambda$ and
with $\{\gamma_{\lambda}\}$ being a sequence of Yosida regularisations (see Section~\ref{sect:max_monotone_graphs} for the exact definition and properties). To obtain an explicit convergence rate we make two observations. The first one is the estimate on the Yosida approximation coming from \cite{CG00}:
\begin{equation}\label{eq:convergence_Yosida_general}
- \int_0^t \int_{\Omega} (\gamma_{\lambda_1}(u_{\lambda_1}) - \gamma_{\lambda_2}(u_{\lambda_2})) (u_{\lambda_1} - u_{\lambda_2}) \leq  C\, (\lambda_1 + \lambda_2).
\end{equation}
With \eqref{eq:convergence_Yosida_general}, it is easy to conclude the convergence with the rate $\sqrt{\lambda}$ in $H^{-1}(\Omega)$ norm as \eqref{eq:NL} is the gradient flow in $H^{-1}(\Omega)$ norm. To obtain the estimate in $L^2(\Omega)$, we use the nonlocal kernel $J$. We assume that $J \in L^2(\Omega)$ so that the operator $u \mapsto J\ast u$ is a Hilbert-Schmidt operator. Such operators are compact so that, up to a small error, their image is a finite-dimensional subspace of $L^2(\Omega)$ on which the two norms, $L^2(\Omega)$ and $H^{-1}(\Omega)$, are equivalent. This yields the information about the convergence in $L^2(\Omega)$. All these comments are made rigorous in Section \ref{sec:proof}. In particular, we state our result for the case with Neumann boundary conditions, but the theorem is valid also in the (simpler) case with Dirichlet boundary conditions or periodic
domains.\\

Several papers have addressed the discretization of \eqref{eq:NL} with the potential approximated via Yosida approximation \cite{MR3017623,MR2837798,MR3170501,MR3163106}. While we do not consider any discretization in our paper, one can apply several available results in the literature for the discretization of \eqref{eq:approx_problem_intro} with the regularized potential $\widehat{F}_{\lambda}$ (see \cite{MR3556407,MR4543432,MR3916957,MR4241519,MR3995984}) and then consider the limits: firstly $\Delta x, \Delta t \to 0$ and then $\lambda \to 0$. While the aforementioned results have been obtained for an explicitly given potential, we expect that they still hold true for an arbitrary regular potential.\\

One can even consider the joint limit $\Delta x, \Delta t, \lambda \to 0$: if $u^{\Delta x, \Delta t}_{\lambda}$ is a solution to the discretization of the regularized problem, we expect
$$
\|u^{\Delta x, \Delta t}_{\lambda} - u_{\lambda}\|_{L^2(0,T; L^2(\Omega))} \leq C(\lambda) \, \left[|\Delta t|^{\alpha} + |\Delta x|^{\beta}\right],
$$
where $\alpha$, $\beta$ depends on the particular method and $C(\lambda) \to \infty$ when $\lambda \to 0$. Then, applying Theorem \ref{thm:main} one gets
$$
\|u^{\Delta x, \Delta t}_{\lambda} - u\|_{L^2(0,T; L^2(\Omega))}  \leq C(\lambda) \, \left[|\Delta t|^{\alpha} + |\Delta x|^{\beta}\right] + C\, \sqrt{\lambda}
$$
so that if $\Delta t$, $\Delta x$ are chosen to depend (appropriately) on $\lambda$, one finally obtains the convergence 
$$
\|u^{\Delta x, \Delta t}_{\lambda} - u\|_{L^2(0,T; L^2(\Omega))} \to 0.
$$
The details depend on the rates $\alpha$, $\beta$ as well as the constant $C(\lambda)$ (i.e. how much the convergence result depends on the regularity of the potential). While the rates $\alpha$, $\beta$ are known for several methods, the constant $C(\lambda)$ has to be obtained by a careful inspection of the convergence proofs and we leave it for future works. We also remark that other schemes have been proposed for the Cahn-Hilliard equation and some related problems in \cite{MR1742748, GLWW,GWW,BCK2023,agosti2023strict,elbar2023analysis,BMS14,CX19,GLW}. \\

The paper is structured as follows: in Section~\ref{sec:setting} we introduce the setting and recall the main tools used in the paper; in Section~\ref{sec:estimate} we compute the uniform estimates and in Section~\ref{sec:proof} we provide the proof of the main theorem (Theorem \ref{thm:main}). 

\section{Assumptions and the main result}
\label{sec:setting}

In order to state our result we need several definitions and tools that we are going to recollect in this section.  In particular, we introduce the notations and hypothesis used in this paper and recall the maximal monotone graphs theory as well as some basic facts on the Hilbert-Schmidt integral operator.

\subsection{Hypotheses}
\begin{Ass}\label{ass:the_main_ass}
Throughout the paper
we assume the following:
\begin{enumerate}[label=(H\arabic*)]
\item $\Omega$ is a smooth bounded domain in $\R^d$, and $T > 0$ is a fixed final time.
\item \label{ass:H2} The kernel $J:
\R^d \to \R^+$ satisfies $J\in W^{1,1}_{\text{loc}}(\R^d)\cap L^2_{\text{loc}}(\R^d)$ is such that $J(x) = J(-x)$ for almost every $x$. 
For any measurable $v:\Omega\rightarrow\mathbb{R}$ we use the notation 
$$
(J*v)(x):=\int_\Omega J(x-y)\, v(y) \diff y,\quad x\in\Omega
$$
and set $a(x)=J*1$. We also assume that for some $a_->0$
\begin{equation}\label{eq:ass_a}
\inf_{x \in \Omega} a(x) > a_{-} > 0.
\end{equation}
\item \label{ass:H3}We assume that $F = \widehat\gamma+\widehat\Pi$, where $\widehat \gamma$ is a convex proper function. We write  $\gamma:\mathbb{R}\to2^\mathbb{R}$ for a maximal
monotone graph such that $0\in\gamma(0)$ and $\gamma$ is a subdifferential of $\widehat\gamma$ in the convex analysis sense. Moreover, we let $\Pi = \widehat{\Pi}'$ and we assume that $\Pi$ is a Lipschitz
function such that
\begin{equation}\label{eq:ass_Lip_Pi}
a_{-} - \|\Pi'\|_{\infty} > 0.
\end{equation}
Without loss of generality we can suppose that $F$ is nonnegative.

\item \label{ass:H4}The initial condition $u_0:\Omega\to \R$ is such that $u_0 \in L^2(\Omega)$ and $\widehat{\gamma}(u_0)\in L^1(\Omega)$. Moreover, we assume that the average of $u_0$
\begin{equation}\label{eq:condition_average_init_cond}
\frac{1}{|\Omega|} \int_{\Omega} u_0 \diff x \in (m_{-}, m_+)
\end{equation}
with $m_- <0< m_+$ and $m_-, m_+\in \mbox{dom}(\gamma)$, where the domain $\mbox{dom}(\gamma)$ is defined in Section \ref{sect:max_monotone_graphs}.
\end{enumerate}
\end{Ass}

\phantom{...}\\
Let us comment the assumptions. Concerning \ref{ass:H2}, the kernel $J$ is required to be defined on $\R^d$ but in fact, it could be defined on the set $\Omega - \Omega$, i.e. the set containing all the points of the form $x-y$ where $x,y \in \Omega$. The $L^2$ regularity of the kernel is necessary to know that the nonlocal operator $J \ast v$ is an Hilbert-Schmidt operator from $L^2(\Omega)$ to $L^2(\Omega)$. We remark that condition \ref{ass:H2} implies 
$$
\nabla a(x) \in L^{\infty}(\Omega).
$$
Moreover, the lower bound on $a$ is a technical matter related to the geometry of $\Omega$ and symmetry of $J$: it is easy to see that it holds true if $\Omega$ is a ball and $J$ is radially symmetric. Concerning~\ref{ass:H3}, the assumption on the potential is fairly standard: the main part $\widehat{\gamma}$ is convex and possibly singular while the term $\widehat{\Pi}$ should be considered as a perturbation which cannot be too large. The reason for considering $\gamma$ as a maximal monotone graph is that the derivative of $\widehat{\gamma}$ may not exist in the classical sense but it exists as a subdifferential of a convex function which is a maximal monotone graph, see Section \ref{sect:max_monotone_graphs} for the relevant definitions. Finally, the first part of \ref{ass:H4} guarantees that the energy related to \eqref{eq:approx_problem_intro} is finite at $t=0$. The condition~\eqref{eq:condition_average_init_cond} says that the initial condition is not supported only at the points where subdifferential $\gamma$ does not exist.   \\

Hence, the most demanding condition is \ref{ass:H3} which still allows us to consider all reasonable choices for the potential. The standard choice for $F$ is the fourth-order polynomial $F_{\text{pol}}(x):=\frac{1}{4}(x^2-1)^2, x\in\R$, with minima, corresponding to the pure phases, in $\pm 1$. A more realistic description is given by the logarithmic double-well potential
\begin{equation}\label{eq:logarithmic_F}
F_{\text{log}}(x):=\frac{\theta}{2}[(1+x)\log(1+x)+(1-x)\log(1-x)]+\frac{\Theta}{2}-cx^2
\end{equation}
with $0<\theta<\Theta$ and $c>0$. With this choice then we are on the bounded domain $(-1, 1)$ and
the minima are within the open interval $(-1, 1)$.
A third example which is also included in our study is the so-called double-obstacle potential (see~\cite{MR1123143,oono1988study})
\begin{equation}\label{eq:example_F_singular}
F_{\text{ob}}(x):=I_{[-1,1]}(x)+\frac{1}{2}(1-x^2),  \quad I_{[-1,1]}(x):=
\begin{cases}
0&\text{if }x\in [-1,1]\\+\infty&\text{otherwise}\,.
\end{cases} .
\end{equation}
With this choice $F'_{\text{ob}}$ is not defined in the usual way, and has to be interpreted as the subdifferential $\partial F_{\text{ob}}$ in the sense of convex analysis.\\

The necessity of singular potentials as in \eqref{eq:splitting_potential_intro} is motivated by one of the first derivation of the (nonlocal) Cahn-Hilliard equation due to Giacomin-Lebowitz \cite{GL,MR1638739} who considered a logarithmic potential \eqref{eq:logarithmic_F}. Furthermore, the double-obstacle potential \eqref{eq:example_F_singular} was proposed by Oono and Puri \cite{oono1988study} to model phase separation (see also \cite{MR1166255, MR1123143}). One of its interesting application is in the inpainting of damaged images \cite{MR3148076,MR3852712} where the double-obstacle potential leads to better visual results comparing to the smooth potentials. Last but not least, the singular potentials appear in several applications, including tumor growth \cite{MR3636313,MR4502226,MR4537582} and flows of the binary mixtures \cite{MR4175449,MR4354132}. Our work covers also several classes of singular kernels $J$, including Riesz, Newtonian, and Bessel, which are used to model nonlocal interactions in multiple settings, including tumor growth \cite{MR4188329, MR3393324}, (Patlak-)Keller-Segel and aggregation-diffusion equation \cite{MR2013508,MR2914242,MR4022083} and related applications in the sampling problems \cite{MR4582478}. Nevertheless, we point out that the focus of the current paper is on the singular potential rather than on the singular kernel.\\

We also highlight that our result could be generalized to anisotropic potential possibly dependent on time, i.e. of type $F=F(u,x,t)$ but additional hypothesis should be specified. \\

\subsection{Maximal monotone graphs theory}\label{sect:max_monotone_graphs}

As specified in \ref{ass:H3} we assume that $F$ can be written as 
$$
F = \widehat{\gamma} + \widehat{\Pi},
$$
where $\widehat{\gamma}: \R \to \R \cup \{\infty\}$ is a convex, lower semicontinuous and $\widehat{\Pi}: \R \to \R$ is a function such that $\Pi:= \widehat{\Pi}'$ is globally Lipschitz continuous. For such $\widehat{\gamma}$, we define its domain as $D(\widehat{\gamma}) = \{x \in \R: \widehat{\gamma}(x) < \infty \}$.\\

The subdifferential ${\gamma}(x)$ is defined as a set-valued map satisfying 
$$
\widehat{\gamma}(y) \geq \widehat{\gamma}(x) + \gamma(x) \, (y-x). 
$$
Here, we slightly abuse the notation and we highlight that we write $\gamma(x)$ for an arbitrary element of the set $\gamma(x)$. It may happen that $\gamma(x)$ is empty for some $x$; hence, we define its domain which is the set of points where $\widehat{\gamma}$ is differentiable $\mbox{dom}(\gamma) = \{x: \gamma(x) \neq \emptyset \}$. The subdifferential $\gamma$ is a maximal monotone graph due to the celebrated result of Rockafellar \cite[Corollary 31.5.2]{MR0274683}. This means that $\gamma(x)$ is monotone
$$
(\gamma(x) - \gamma(y)) \, (x-y) \geq 0 \qquad \qquad \forall{x,y \in \mbox{dom}(\gamma)}
$$
and that there is no bigger (in the sense of inclusions of graphs) multi-valued map which is monotone. Below we briefly recall the most important facts while for the complete theory we refer to \cite[Chapter 3]{AC84} and \cite{brezis}. \\

Concerning the relation between $D(\widehat{\gamma})$ and $\mbox{dom}(\gamma)$, it is well-known that if $\widehat{\gamma}(x)<\infty$ and $
\widehat{\gamma}$ is continuous at $x$, then $
\gamma(x)$ is nonempty cf. \cite[Proposition 3.29 (ii)]{MR3821514}. The continuity assumption is not restrictive since any convex function is continuous on the interior of its effective domain, see \cite[Proposition 2.20]{MR3821514}. On the other hand, if $\gamma(x)$ is nonempty, it means that $x \in D(\widehat{\gamma})$ (otherwise, $\gamma(x)$ has to be empty). Therefore, $\mbox{dom}(\gamma) \subset D(\widehat{\gamma})$ and this inclusion can be strict, see \cite[p.218]{MR0274683}. We note the following:

\begin{remark}\label{rem:all_the_interval_belongs}
If $a, b \in \mbox{dom}(\gamma)$, then $(a,b) \subset \mbox{dom}(\gamma)$. Indeed $\widehat{\gamma}$ has to be finite on $[a,b
]$ and continuous on $(a,b)$. 
\end{remark}

To introduce Yosida approximation, it is important to recall that the fact that $\gamma$ is a maximal monotone graph is equivalent with saying that $( I + \lambda \gamma)^{-1}$ is a contraction for all $\lambda > 0$ well-defined on $\R$ cf. \cite[Proposition 2.2]{MR0348562}. It means that
$$
|( I + \lambda\, \gamma)^{-1}(x_1) - ( I + \lambda\, \gamma)^{-1}(x_2)| \leq  |x_1 - x_2|
$$
for all $x_1, x_2 \in \R$. More precisely, monotonicity always implies that $( I + \lambda \gamma)^{-1}$ is a contraction but it is the maximal monotonicity which implies that the range of $I + \lambda \gamma$ is the whole space $\R$ so that $( I + \lambda \gamma)^{-1}$ is defined on $\R$.  \\

\noindent For the sake of analysis, we need to approximate multi-valued map $\gamma$ with a single-valued map via the so-called Yosida approximation. To this end, we let
\begin{equation}\label{eq:def_gamma_lambda}
\gamma_{\lambda}:= \frac{I - J_{\lambda}}{\lambda}, \qquad \qquad J_{\lambda} = (I + \lambda \, \gamma)^{-1},
\end{equation}
where $I$ is the identity map. Then, the Yosida approximation is defined with
\begin{equation}\label{eq:Yosida_approx}
\widehat{\gamma_{\lambda}}(x) = \min_{y \in \R} \left\{\frac{1}{2\lambda} |y-x|^2 + \widehat{\gamma}(y) \right\}
\end{equation}
Formula \eqref{eq:Yosida_approx} immediately implies that $\widehat{\gamma_{\lambda}}(x)$ is convex.\\

To understand better the relation between $\widehat{\gamma_{\lambda}}$ and ${\gamma_{\lambda}}$, we note that the minimum has to be attained for $y$ such that $\frac{x-y}{\lambda} \in \partial \widehat{\gamma}(y) = \gamma(y)$ which means that $y = J_{\lambda}(x)$ and the formula simplifies to
$$
\widehat{\gamma_{\lambda}}(x) = \frac{\lambda}{2} |\gamma_{\lambda}x|^2 + \widehat{\gamma}(J_{\lambda}x).
$$
We also note that from this reasoning we have $\frac{x-J_{\lambda}(x)}{\lambda} \in \gamma(J_{\lambda}(x))$ so that
\begin{equation}\label{eq:fact_about_gamma_lambda_in_the_resolvent}
\gamma_{\lambda}(x) \in \gamma(J_{\lambda}(x)),
\end{equation}
which will be useful in our reasoning. Finally, one can check that $\widehat{\gamma_{\lambda}}$ is classically differentiable, $\widehat{\gamma_{\lambda}}' = \gamma_{\lambda}$ and $\widehat{\gamma_{\lambda}} \nearrow \widehat{\gamma}$ as $\lambda \to 0$ cf. \cite[Proposition 2.11]{MR0348562}.\\

\noindent Concerning the behaviour of the derivatives $\gamma_{\lambda}$, we first define $\gamma_0(x)$ to be an element of $\gamma(x)$ with a minimal norm (it exists by maximal monotonicity). Then, it is known cf. \cite[Proposition 2.6]{MR0348562} that
\begin{equation}\label{eq:convergence_to_the_minimal_element}
|\gamma_{\lambda}(x)| \nearrow |\gamma_0(x)| \mbox{ for all } x\in \mbox{dom}(\gamma).
\end{equation}

We conclude this discussion with a consequence of the assumption $0 \in \gamma(0)$. Namely, we have \begin{equation}\label{eq:gamma_lambda_at_0}
\gamma_{\lambda}(0) = 0.
\end{equation}
Indeed, since $0 \in \gamma(0)$, the unique solution to equation $0 = u + \lambda\, \gamma(u)$ equals $u=0$ so that $J_{\lambda}(0) = 0$. Using \eqref{eq:def_gamma_lambda}, we obtain $\gamma_{\lambda}(0) = 0$.
\subsection{The nonlocal operator}
Let us consider the operator
\[
Bu(x):=\int_\Omega J(x - y)(u(x)-u(y))\diff y.
\]
First, it is worth considering only a part of $B$, namely 
$$
Iu(x):= \int_\Omega J(x - y) \, u(y)\diff y.
$$
It follows that since $J \in L^2_{\text{loc}}(\R^d)$, the operator $I: L^2(\Omega) \to L^2(\Omega)$ is Hilbert-Schmidt~\cite[Theorem 8.83]{RR04}. In particular, it is a compact operator (that is, the image of the unit ball is relatively compact) and for any orthonormal basis $\{e_i\}$ in $L^2(\Omega)$, we have (see \cite[Appendix C]{Da_Prato_2014}):
\begin{equation}
\sum_{i=1}^{\infty} \|I e_i\|_{L^2(\Omega)}^2 <\infty.
\end{equation}
Concerning the operator $B$, we have the following properties which are a simple consequence of the symmetry and the nonnegativity of $J$:
\begin{lemma}\label{lem:3props_on_nonloca}
    It holds:
\begin{enumerate}[label=(\roman*)] 
    \item\label{prop1} $\int_{\Omega} Bu\, v \diff x = \int_{\Omega} Bv\, u \diff x$,
    \item\label{prop2} $\int_{\Omega} Bu \diff x = 0$,
    \item\label{prop3} $\int_{\Omega} Bu\, u \diff x = \int_{\Omega} \int_{\Omega} J(x-y)(u(x) - u(y))^2 \diff x \diff y \geq 0$.
    \end{enumerate}
\end{lemma}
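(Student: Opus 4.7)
The plan is to derive all three identities by elementary algebraic manipulation under the integral sign, using only Fubini's theorem, the symmetry $J(x-y)=J(y-x)$ from \ref{ass:H2}, and the definition $a(x)=\int_\Omega J(x-y)\,\diff y$. No spectral or compactness input is needed; the proof is just a careful bookkeeping of the swap $(x,y)\leftrightarrow(y,x)$.

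For \emph{step 1} (part \ref{prop1}), I would expand
\begin{equation*}
\int_\Omega Bu\,v\,\diff x = \int_\Omega a(x)\,u(x)\,v(x)\,\diff x - \int_\Omega\int_\Omega J(x-y)\,u(y)\,v(x)\,\diff y\,\diff x,
\end{equation*}
and apply Fubini to the double integral together with the change of variables $(x,y)\leftrightarrow(y,x)$, which by symmetry of $J$ rewrites it as $\int_\Omega\int_\Omega J(x-y)\,u(x)\,v(y)\,\diff y\,\diff x$. Both terms are then manifestly symmetric in $(u,v)$, yielding \ref{prop1}.

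For \emph{step 2} (part \ref{prop2}), I would specialize the expansion above to $v\equiv 1$. Using Fubini once more, the double integral becomes $\int_\Omega u(y)\bigl(\int_\Omega J(x-y)\,\diff x\bigr)\,\diff y = \int_\Omega a(y)\,u(y)\,\diff y$ thanks to the symmetry of $J$, which exactly cancels the first term on the right-hand side.

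For \emph{step 3} (part \ref{prop3}), I would use the standard symmetrization trick: setting $A := \int_\Omega Bu\,u\,\diff x = \int_\Omega\int_\Omega J(x-y)\,(u(x)-u(y))\,u(x)\,\diff y\,\diff x$ and swapping $(x,y)\leftrightarrow(y,x)$ (legitimate by the symmetry of $J$) gives $A = -\int_\Omega\int_\Omega J(x-y)\,(u(x)-u(y))\,u(y)\,\diff y\,\diff x$. Averaging the two expressions reconstructs the integral of $J(x-y)\,(u(x)-u(y))^2$, and nonnegativity follows at once from $J\ge 0$. There is no real obstacle in this lemma; the only subtlety worth flagging is the justification of Fubini, but this is automatic since $J\in L^2_{\text{loc}}(\R^d)$, $u,v\in L^2(\Omega)$, and $\Omega$ is bounded, so all integrands lie in $L^1(\Omega\times\Omega)$.
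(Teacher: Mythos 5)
Your proof is correct and uses essentially the same mechanism as the paper: the symmetry of $J$ and the variable swap $(x,y)\leftrightarrow(y,x)$ under Fubini. The paper organizes it a bit more economically by first establishing the single symmetrized identity $\int_\Omega Bu\,v\,\diff x = \tfrac{1}{2}\int_\Omega\int_\Omega J(x-y)(u(x)-u(y))(v(x)-v(y))\,\diff x\,\diff y$ and then reading off all three parts (with \ref{prop2} obtained from \ref{prop1} via $B(1)=0$), whereas you treat each part separately, but the underlying computation is the same.
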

\begin{proof}
    We first note that since $J$ is symmetric
$$
\int_{\Omega} \int_{\Omega} J(x-y)\,(u(x)-u(y))\,v(x) \diff x \diff y = 
-\int_{\Omega} \int_{\Omega} J(x-y)\,(u(x)-u(y))\,v(y) \diff x \diff y 
$$
which implies
$$
\int_{\Omega} Bu\, v \diff x = \frac{1}{2} \int_{\Omega} \int_{\Omega} J(x-y)\,(u(x)-u(y))\,(v(x)-v(y)) \diff x \diff y.
$$
Hence, \ref{prop3} follows immediately. For \ref{prop1}, we observe that the expression above does not change when $u$ and $v$ are interchanged. Finally, \ref{prop2} follows from \ref{prop1} because $B(1) = 0$.
\end{proof}
\medskip

\subsection{The main result}
The nonlocal Cahn-Hilliard equation we are going to consider has then the following form:
\begin{equation}\label{eq:nlch}
\begin{split}
    \partial_t u &= \Delta \mu,\mbox{ in }  \Omega\\
    \mu &= B(u) + \gamma(u) + \Pi(u),\mbox{ in } \Omega
\\
    \frac{\partial \mu}{\partial \textbf{n}} &= 0, \mbox{ on } \partial \Omega.
\end{split}
\end{equation}

Using Yosida approximation defined in Section \ref{sect:max_monotone_graphs}, we consider solutions to 
\begin{equation}\label{eq:approx_problem}
\begin{split}
    \partial_t u_{\lambda} &= \Delta \mu_{\lambda},\mbox{ in }  \Omega\\
    \mu_{\lambda} &= B(u_{\lambda}) + \gamma_{\lambda}(u_{\lambda}) + \Pi(u_{\lambda}),\mbox{ in }  \Omega\\
    \frac{\partial \mu_{\lambda}}{\partial \textbf{n}} &= 0, \mbox{ on } \partial \Omega
\end{split}
\end{equation}
where $\gamma_{\lambda}$ is defined in \eqref{eq:def_gamma_lambda}. Our main results reads:
\begin{theorem}\label{thm:main}
Suppose that Assumption \ref{ass:the_main_ass} holds true. Then,
$$
\|u_{\lambda} - u\|_{L^2((0,T)\times\Omega)} \leq C\,\sqrt{\lambda},
$$
where the constant $C$ depends on the model functions $\widehat{\Pi}$, $\widehat{\gamma}$ and the norm of the initial condition in $L^2(\Omega)$.
\end{theorem}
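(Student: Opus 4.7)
The plan is to prove the Cauchy estimate $\|u_{\lambda_1}-u_{\lambda_2}\|_{L^2((0,T)\times\Omega)}^2\le C(\lambda_1+\lambda_2)$ for two Yosida parameters and conclude by sending $\lambda_2\to 0$ along the classical compactness-based convergence of $u_{\lambda_2}$ to a solution of \eqref{eq:nlch}. Because \eqref{eq:approx_problem} is mass-conservative (thanks to the Neumann condition on $\mu_{\lambda_i}$) and both approximations start from $u_0$, the difference $w:=u_{\lambda_1}-u_{\lambda_2}$ has zero spatial mean. So the natural test function is $\phi:=(-\Delta_N)^{-1}w$, where $\Delta_N$ is the Neumann Laplacian restricted to mean-zero functions. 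Two integrations by parts, using $\partial_{\mathbf n}\mu_{\lambda_i}=0$, then give
\begin{equation*}
\frac12\frac{d}{dt}\|w\|_{H^{-1}}^2 + \int_\Omega (\mu_{\lambda_1}-\mu_{\lambda_2})\,w\diff x = 0.
\end{equation*}

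Decomposing $\mu_{\lambda_i}=B(u_{\lambda_i})+\gamma_{\lambda_i}(u_{\lambda_i})+\Pi(u_{\lambda_i})$ and using $Bu=a(x)u-J*u$, the Lipschitz bound on $\Pi$, and the coercivity assumptions \eqref{eq:ass_a}--\eqref{eq:ass_Lip_Pi}, I obtain pointwise in time
\begin{equation*}
\int_\Omega (\mu_{\lambda_1}-\mu_{\lambda_2})\,w\diff x \ge (a_--\|\Pi'\|_\infty)\|w\|_{L^2}^2 - \int_\Omega (J*w)\,w\diff x + \int_\Omega (\gamma_{\lambda_1}(u_{\lambda_1})-\gamma_{\lambda_2}(u_{\lambda_2}))\,w\diff x.
\end{equation*}
After time integration the last term is controlled by the key inequality \eqref{eq:convergence_Yosida_general}, contributing only $O(\lambda_1+\lambda_2)$. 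The genuine difficulty is to absorb the nonlocal remainder $\int_\Omega (J*w)\,w\diff x$, which has the same order as the coercive $L^2$ term and therefore cannot be handled by a naive Young/Cauchy--Schwarz bound; this is where the Hilbert--Schmidt structure enters.

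The plan for this step is to use that $I:v\mapsto J*v$ is compact on $L^2(\Omega)$ (Hilbert--Schmidt, by \ref{ass:H2}) and self-adjoint (by symmetry of $J$), while $J\in W^{1,1}_{\text{loc}}$ forces its range to lie in $H^1(\Omega)$. The spectral theorem yields an orthonormal eigenbasis $\{e_k\}$ with eigenvalues $\sigma_k\to 0$, and every eigenfunction $e_k$ with $\sigma_k\neq 0$ automatically belongs to $H^1(\Omega)$. Given $\varepsilon>0$, fix $N$ such that $|\sigma_k|\le\varepsilon$ for $k>N$; then the duality bound $|\langle w,e_k\rangle|\le\|w\|_{H^{-1}}\|e_k\|_{H^1}$ gives
\begin{equation*}
\left|\int_\Omega (J*w)\,w\diff x\right| = \left|\sum_k \sigma_k|\langle w,e_k\rangle|^2\right| \le C_\varepsilon\|w\|_{H^{-1}}^2 + \varepsilon\|w\|_{L^2}^2,
\end{equation*}
with $C_\varepsilon$ depending only on the first $N$ eigenpairs. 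This is precisely the interpolation that lets the nonlocal term live on the weaker $H^{-1}$ scale while only consuming an $\varepsilon$-fraction of the $L^2$-coercivity.

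Choosing $\varepsilon=(a_--\|\Pi'\|_\infty)/2$ and combining the three steps gives, with $w(0)=0$,
\begin{equation*}
\frac{d}{dt}\|w\|_{H^{-1}}^2 + c\,\|w\|_{L^2}^2 \le C\,\|w\|_{H^{-1}}^2 + \widetilde C(\lambda_1+\lambda_2).
\end{equation*}
Dropping the $L^2$ term and applying Grönwall yields $\sup_{[0,T]}\|w\|_{H^{-1}}^2\lesssim \lambda_1+\lambda_2$; reinserting this bound and integrating in time produces the matching $\int_0^T\|w\|_{L^2}^2\diff s\lesssim \lambda_1+\lambda_2$. Passing $\lambda_2\to 0$ then yields Theorem~\ref{thm:main}. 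The main obstacle is the spectral/compactness step: everything rests on converting the $L^2$-operator $I$ into a near-finite-rank object whose action can be dominated by the $H^{-1}$ norm of the test function, and it is here that both parts of \ref{ass:H2} (the $L^2$ part for Hilbert--Schmidt compactness, the $W^{1,1}$ part for $H^1$-regularity of eigenfunctions) play distinct and essential roles.
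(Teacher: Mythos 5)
Your proposal is correct and reaches the theorem by the same overall scaffold as the paper — the $H^{-1}$ energy identity for the difference $w=u_{\lambda_1}-u_{\lambda_2}$, the three-way split of $\overline{\mu}\,\overline{u}$, the key Yosida Cauchy estimate from~\cite{CG00} handling the $\gamma$-term, the Lipschitz bound for $\Pi$, and Gr\"onwall — but it handles the delicate nonlocal term $\int_\Omega (J*w)\,w$ by a genuinely different device. The paper chooses the orthonormal basis of Neumann--Laplacian eigenfunctions $\{e_i\}$, defines the truncation $I_k f=\sum_{i\le k}\langle If,e_i\rangle e_i$, uses $\|I-I_k\|\to 0$ (a consequence of the Hilbert--Schmidt property), and then exploits the finite-dimensional equivalence of $L^2$ and $H^{-1}$ norms on $\mathrm{span}(e_1,\dots,e_k)$, which is automatic because these are $(-\Delta)$-eigenfunctions. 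You instead diagonalize $I$ itself, control the tail $|\sigma_k|\le\varepsilon$ directly by $\varepsilon\|w\|_{L^2}^2$, and control the finitely many head modes via the duality $|\langle w,e_k\rangle|\le\|w\|_{H^{-1}}\|\nabla e_k\|_{L^2}$; the price is that you must know the nonzero-eigenvalue eigenfunctions of $I$ lie in $H^1(\Omega)$, which you correctly derive from $\sigma_k e_k=J*e_k$, $\nabla(J*e_k)=(\nabla J)*e_k$, and Young's inequality, hence using the $W^{1,1}$ half of \ref{ass:H2}. The paper's variant only needs the $L^2$ half of \ref{ass:H2} for this step (the $(-\Delta)$-eigenbasis is already $H^1/H^{-1}$-adapted), so it is marginally more self-contained in the nonlocal estimate, but since $J\in W^{1,1}_{\mathrm{loc}}$ is assumed anyway for the a priori bounds, the two routes are interchangeable. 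One cosmetic point: the final Gr\"onwall step should be stated in integral form, since the Yosida contribution $O(\lambda_1+\lambda_2)$ is a time-integrated bound, not a pointwise-in-$t$ one — your differential inequality is a slight abuse but clearly repairable.
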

It will be clear from the proof that the constant depends mostly on the distance between $a_{-}$ and $\|\Pi'\|_{\infty}$ as in \eqref{eq:ass_Lip_Pi}.\\



\section{Uniform estimates}
\label{sec:estimate}

We recall that existence of solution to the nonlocal Cahn-Hilliard equation with $W^{1,1}$ kernel has been proved in~\cite{DST2}.
In this section, we consider solutions to \eqref{eq:approx_problem} and prove the following uniform estimates:
\begin{theorem}\label{thm:uniform_estimates}
The following sequences are bounded:
\begin{enumerate}[label=(\Alph*)]   
\item\label{estim1} $\{u_{\lambda}\}$ in $L^{\infty}_t L^2_x$,
\item\label{estim2} $\{\nabla u_{\lambda}\}$ in $L^{2}_t L^2_x$
\item\label{estim3} $\{ \mu_{\lambda}\}$ in $L^{2}_t L^2_x$,
\item\label{estim4} $\{\nabla \mu_{\lambda}\}$ in $L^{2}_t L^2_x$,
\item\label{estim5} $\{\gamma_{\lambda}(u_{\lambda})\}$ in $L^2_t L^2_x$.
\end{enumerate}
\end{theorem}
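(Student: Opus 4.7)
The strategy is to derive the estimates in the order \ref{estim4}, \ref{estim1}, \ref{estim2}, \ref{estim3}, \ref{estim5}, exploiting at each step the structure of $\mu_\lambda$ and the strict monotonicity gap $a_- - \|\Pi'\|_\infty > 0$ from \eqref{eq:ass_Lip_Pi}. First I would test $\partial_t u_\lambda = \Delta \mu_\lambda$ by $\mu_\lambda$ to establish the formal energy identity. Using the Neumann condition, the chain rule for $\widehat{\gamma_\lambda}$ (which is $C^1$) and $\widehat{\Pi}$, and the symmetry of $J$ (which gives $\tfrac{d}{dt}\tfrac14\iint J(u-u)^2 = \int B(u_\lambda)\,\partial_t u_\lambda$), one obtains
\[
\frac{d}{dt} E_\lambda(u_\lambda(t)) + \|\nabla \mu_\lambda(t)\|_{L^2}^2 = 0,
\]
where $E_\lambda$ is the energy with $\widehat{\gamma}$ replaced by $\widehat{\gamma_\lambda}$. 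Since $\widehat{\gamma_\lambda} \leq \widehat{\gamma}$, assumption \ref{ass:H4} gives $E_\lambda(u_0) \leq E(u_0) < \infty$ uniformly in $\lambda$, which proves \ref{estim4}.

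The crucial step is to derive \ref{estim1} and \ref{estim2} simultaneously by testing the evolution equation against $u_\lambda$. After integrating by parts (there is no boundary contribution thanks to $\partial_{\mathbf n}\mu_\lambda = 0$) and expanding $\nabla \mu_\lambda$ using $\mu_\lambda = a u_\lambda - J \ast u_\lambda + \gamma_\lambda(u_\lambda) + \Pi(u_\lambda)$, one arrives at
\[
\tfrac12 \tfrac{d}{dt}\|u_\lambda\|_{L^2}^2 + \int_\Omega \bigl(a + \gamma_\lambda'(u_\lambda) + \Pi'(u_\lambda)\bigr)|\nabla u_\lambda|^2 \,\xd x = -\int_\Omega \bigl(u_\lambda\,\nabla a - (\nabla J)\ast u_\lambda\bigr) \cdot \nabla u_\lambda \,\xd x.
\]
Monotonicity of $\gamma_\lambda$ yields $\gamma_\lambda' \geq 0$ and \eqref{eq:ass_Lip_Pi} gives $a + \Pi'(u_\lambda) \geq a_- - \|\Pi'\|_\infty > 0$; on the right, \ref{ass:H2} ensures $\nabla a \in L^\infty(\Omega)$ (as remarked after the assumptions), and Young's convolution inequality gives $\|(\nabla J)\ast u_\lambda\|_{L^2} \leq \|\nabla J\|_{L^1(\Omega - \Omega)}\|u_\lambda\|_{L^2}$. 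After Young's inequality absorbs half of $\|\nabla u_\lambda\|_{L^2}^2$, one is left with
\[
\tfrac{d}{dt}\|u_\lambda\|_{L^2}^2 + (a_- - \|\Pi'\|_\infty)\|\nabla u_\lambda\|_{L^2}^2 \leq C\|u_\lambda\|_{L^2}^2,
\]
from which Gronwall gives \ref{estim1}, and a time integration then gives \ref{estim2}.

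For \ref{estim3} I would split $\mu_\lambda = \bar\mu_\lambda + (\mu_\lambda - \bar\mu_\lambda)$: the zero-mean part is bounded by the Poincaré--Wirtinger inequality combined with \ref{estim4}. The mean satisfies $\bar\mu_\lambda = |\Omega|^{-1}\int(\gamma_\lambda(u_\lambda) + \Pi(u_\lambda))$ because $\int B(u_\lambda) = 0$ by Lemma \ref{lem:3props_on_nonloca}; the $\Pi$-contribution is controlled by \ref{estim1} and the Lipschitz estimate on $\Pi$, while for $\|\gamma_\lambda(u_\lambda)\|_{L^1}$ I would apply the Kenmochi-type inequality: since $\bar u_0 \in (m_-, m_+) \subset \mathrm{int}(\mbox{dom}(\gamma))$ by \ref{ass:H4}, there exist $\delta, C > 0$ such that $\gamma_\lambda(s)(s - \bar u_0) \geq \delta |\gamma_\lambda(s)| - C$ for every $s \in \R$ and all sufficiently small $\lambda$. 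Testing the $\mu_\lambda$-equation by $u_\lambda - \bar u_0$ (zero-mean, so $\bar\mu_\lambda$ drops out) and using Lemma \ref{lem:3props_on_nonloca} then bounds $\|\gamma_\lambda(u_\lambda)\|_{L^1}$ by $C\bigl(1 + \|\nabla \mu_\lambda\|_{L^2}\bigr)\bigl(1 + \|u_\lambda\|_{L^2}\bigr)$, which is in $L^2_t$. Finally, \ref{estim5} follows by multiplying the $\mu_\lambda$-equation by $\gamma_\lambda(u_\lambda)$: the sign condition $u_\lambda\gamma_\lambda(u_\lambda)\geq 0$ discards $\int a u_\lambda \gamma_\lambda(u_\lambda)$ with the favourable sign, and Young's inequality absorbs the terms proportional to $\|\gamma_\lambda(u_\lambda)\|_{L^2}^2$, reducing everything to \ref{estim1}, \ref{estim3} and the Lipschitz control on $\Pi$.

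The main obstacle is the second step: the $L^\infty_t L^2_x$ bound on $u_\lambda$ cannot be read off from the energy alone, since the nonlocal Dirichlet form $\iint J(u-u)^2$ does not yield a Poincaré-type coercivity under the assumptions on $J$. The direct testing against $u_\lambda$ succeeds precisely because of the strict monotonicity gap $a_- - \|\Pi'\|_\infty > 0$ and the $W^{1,1}_{\text{loc}}$ regularity of $J$ in \ref{ass:H2}: the former provides gradient coercivity after Young, the latter makes $\nabla a$ bounded and $(\nabla J)\ast u_\lambda$ lie in $L^2$. Technically all computations should be carried out on a suitable approximation (e.g., a Galerkin scheme) where $u_\lambda$ is smooth enough to justify the chain rule and the integrations by parts, and the final bounds are then transferred to the limit by weak lower semicontinuity.
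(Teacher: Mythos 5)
Your proof is essentially correct and follows the same overall route as the paper: test against $\mu_\lambda$ for the energy/gradient estimate \ref{estim4}, test against $u_\lambda$ and exploit the monotonicity gap $a_- - \|\Pi'\|_\infty > 0$ for \ref{estim1}--\ref{estim2}, a Kenmochi-type inequality combined with testing against $u_\lambda - \overline{u_0}$ for the $L^1_x$ control on $\gamma_\lambda(u_\lambda)$ and hence for \ref{estim3}, and a final structural argument from the $\mu_\lambda$-relation for \ref{estim5}.

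Two points where you diverge in a minor but genuine way. For the $L^1_x$ bound on $\gamma_\lambda(u_\lambda)$, the paper tests the \emph{evolution} equation $\partial_t u_\lambda = \Delta \mu_\lambda$ against $(-\Delta)^{-1}(u_\lambda - \overline{u_0})$ and bounds the duality pairing using the $L^2_t H^{-1}_x$ bound on $\partial_t u_\lambda$; you instead test the \emph{algebraic} $\mu_\lambda$-equation against $u_\lambda - \overline{u_0}$ and control the cross-term $\int(\mu_\lambda - \overline{\mu_\lambda})(u_\lambda - \overline{u_0})$ by Poincar\'e--Wirtinger and \ref{estim4}. Both are legitimate and lead to the same conclusion, and your version is arguably a little more elementary since it never introduces the inverse Laplacian. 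For \ref{estim5}, the paper simply isolates $\gamma_\lambda(u_\lambda) = \mu_\lambda - a u_\lambda + J\ast u_\lambda - \Pi(u_\lambda)$ and bounds each term in $L^2_{t,x}$; your multiplication by $\gamma_\lambda(u_\lambda)$ with the sign $u_\lambda\gamma_\lambda(u_\lambda)\geq 0$ (valid since $\gamma_\lambda(0)=0$ and $\gamma_\lambda$ is nondecreasing) also works but takes a detour, since no sign is needed once each term on the right-hand side is already known to lie in $L^2_{t,x}$.

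One small imprecision to flag: you attribute the need for an approximation scheme to the smoothness of $u_\lambda$, but the actual obstruction in the $u_\lambda$-test is that $\gamma_\lambda$ is only Lipschitz, so $\gamma_\lambda'$ need not exist pointwise. The paper's Remark \ref{rem:second_derivative_gamma} handles this by mollifying $\gamma_\lambda$ itself, retaining only the sign $\gamma_\lambda^{\varepsilon\,\prime}\geq 0$, and then using uniqueness for the regular (nondegenerate, smooth-potential) equation to pass $\varepsilon \to 0$. Also, the Kenmochi-type inequality $\gamma_\lambda(s)(s - \overline{u_0}) \geq \delta|\gamma_\lambda(s)| - C$ holds uniformly for \emph{all} $\lambda > 0$ (the paper proves it via monotone convergence $|\gamma_\lambda| \nearrow |\gamma_0|$ on $\mathrm{dom}(\gamma)$), so the restriction to ``sufficiently small $\lambda$'' is unnecessary.
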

The proof of Theorem \ref{thm:uniform_estimates} is split into several parts. 
\begin{lemma}\label{lem:part1_of_unif_estimates}
Assertions \ref{estim1}, \ref{estim2} and \ref{estim4} in Theorem \ref{thm:uniform_estimates} hold true.     
\end{lemma}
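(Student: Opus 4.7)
The plan is to combine two complementary estimates: the standard dissipation identity obtained by testing the first equation of \eqref{eq:approx_problem} against $\mu_{\lambda}$, and a direct $L^2$ estimate obtained by testing it against $u_{\lambda}$ itself. Multiplying $\partial_t u_{\lambda} = \Delta \mu_{\lambda}$ by $\mu_{\lambda}$, integrating by parts (legitimate by the Neumann condition), substituting $\mu_{\lambda} = B(u_{\lambda}) + \gamma_{\lambda}(u_{\lambda}) + \Pi(u_{\lambda})$, and combining the chain rule with Lemma~\ref{lem:3props_on_nonloca}(i) and (iii), will yield the dissipation identity
\begin{equation*}
\frac{d}{dt} E_{\lambda}(u_{\lambda}) + \|\nabla \mu_{\lambda}\|_{L^2(\Omega)}^2 = 0, \quad E_{\lambda}(u) := \frac{1}{4}\int_{\Omega}\int_{\Omega} J(x-y)(u(x)-u(y))^2 \diff x \diff y + \int_{\Omega} \bigl(\widehat{\gamma_{\lambda}}(u) + \widehat{\Pi}(u)\bigr) \diff x.
\end{equation*}
This identity will deliver \ref{estim4} once $u_{\lambda}$ is controlled in $L^{\infty}_t L^2_x$: the initial energy is uniformly bounded because $\widehat{\gamma_{\lambda}} \leq \widehat{\gamma}$ pointwise (directly from \eqref{eq:Yosida_approx}) while $\widehat{\gamma}(u_0) \in L^1(\Omega)$ by \ref{ass:H4}, and the terminal energy is bounded below using the nonnegativity of the nonlocal part and of $\widehat{\gamma_{\lambda}}$ (after the harmless normalization $\widehat{\gamma}(0) = 0$ justified by $0 \in \gamma(0)$), together with the Lipschitz-type bound $\widehat{\Pi}(u) \geq -C(1+u^2)$.

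The core of the argument is therefore the $L^{\infty}_t L^2_x$ bound on $u_{\lambda}$, which cannot be read off $E_{\lambda}$ alone because the nonlocal form $\int_\Omega B u\cdot u$ is merely nonnegative, not coercive. Instead, I would test $\partial_t u_{\lambda} = \Delta \mu_{\lambda}$ directly against $u_{\lambda}$, which after integration by parts gives $\frac{1}{2} \frac{d}{dt}\|u_{\lambda}\|_{L^2}^2 = -\int_{\Omega} \nabla \mu_{\lambda} \cdot \nabla u_{\lambda} \diff x$, and expand
\begin{equation*}
\nabla \mu_{\lambda} = (\nabla a)\,u_{\lambda} + a\,\nabla u_{\lambda} - (\nabla J) * u_{\lambda} + \gamma_{\lambda}'(u_{\lambda}) \nabla u_{\lambda} + \Pi'(u_{\lambda}) \nabla u_{\lambda}.
\end{equation*}
The dominant contribution $\int_{\Omega} a |\nabla u_{\lambda}|^2 \geq a_{-} \|\nabla u_{\lambda}\|_{L^2}^2$ combines with the nonnegative $\int_{\Omega} \gamma_{\lambda}'(u_{\lambda})|\nabla u_{\lambda}|^2 \geq 0$ (monotonicity of $\gamma_{\lambda}$) to absorb the bad piece $\int_{\Omega} \Pi'(u_{\lambda})|\nabla u_{\lambda}|^2 \geq -\|\Pi'\|_{\infty} \|\nabla u_{\lambda}\|_{L^2}^2$, leaving the coercive quantity $(a_{-} - \|\Pi'\|_{\infty}) \|\nabla u_{\lambda}\|_{L^2}^2$ thanks to \eqref{eq:ass_Lip_Pi}. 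The two cross-terms are handled by $\|\nabla a\|_{\infty} < \infty$ (noted in the discussion of \ref{ass:H2}) and by Young's convolution inequality $\|(\nabla J) * u_{\lambda}\|_{L^2(\Omega)} \leq \|\nabla J\|_{L^1(\Omega - \Omega)}\|u_{\lambda}\|_{L^2(\Omega)}$, finite because $\nabla J \in L^1_{\mathrm{loc}}(\R^d)$ and $\Omega - \Omega$ is bounded.

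Applying Young's inequality to absorb half of the coercive term yields a differential inequality of the form $\frac{d}{dt}\|u_{\lambda}\|_{L^2}^2 + c\|\nabla u_{\lambda}\|_{L^2}^2 \leq C \|u_{\lambda}\|_{L^2}^2$; Grönwall then produces \ref{estim1} and time integration produces \ref{estim2}. Feeding \ref{estim1} back into the energy identity closes \ref{estim4}. The main obstacle is precisely the failure of $E_{\lambda}$ to control $\|u_{\lambda}\|_{L^2}$ directly, which forces the detour through testing against $u_{\lambda}$; the price is having to differentiate the nonlocal operator, and this is exactly where the $W^{1,1}_{\mathrm{loc}}$ half of \ref{ass:H2} (distinct from the $L^2_{\mathrm{loc}}$ half used elsewhere for the Hilbert-Schmidt structure) genuinely enters, while the balance $a_{-} > \|\Pi'\|_{\infty}$ from \eqref{eq:ass_Lip_Pi} is indispensable for closing Grönwall.
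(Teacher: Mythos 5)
Your proposal follows essentially the same route as the paper's proof: test against $u_{\lambda}$, expand $\nabla\mu_{\lambda}$ (using $\gamma_{\lambda}'\ge 0$ from convexity and the coercivity margin $a_- - \|\Pi'\|_{\infty}>0$ together with Young's convolution inequality for $(\nabla J)*u_{\lambda}$) to close a Gr\"onwall argument for \ref{estim1} and \ref{estim2}, and separately test against $\mu_{\lambda}$ for the dissipation identity yielding \ref{estim4}. The only cosmetic difference is the order of presentation, plus the fact that you spell out why the initial energy is uniformly bounded ($\widehat{\gamma_{\lambda}}\le\widehat{\gamma}$) and why the terminal $\widehat{\Pi}$-term is controlled by the $L^\infty_t L^2_x$ bound, details the paper leaves implicit.
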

\begin{proof}
We multiply \eqref{eq:approx_problem} with $u_{\lambda}$ to get
\begin{equation}\label{eq:estimates_mult_by_u}
\begin{split}
\partial_t \int_{\Omega} \frac{u_{\lambda}^2}{2} = \int_{\Omega} \Delta \mu_{\lambda} \, u_\lambda &= - \int_{\Omega} \nabla \mu_{\lambda} \, \nabla u_{\lambda} = \\ &=
-\int_{\Omega} \Pi'(u_\lambda) |\nabla u_{\lambda}|^2 - \int_{\Omega} \gamma_{\lambda}'(u_{\lambda}) |\nabla u_{\lambda}|^2 - \int_{\Omega} \nabla B(u_{\lambda}) \cdot \nabla u_{\lambda}.
\end{split}
\end{equation}
Now, by convexity of the Yosida approximation, we have 
\begin{equation}\label{eq:estimates_gamma'_geq0}
\int_{\Omega} \gamma_{\lambda}'(u_{\lambda}) |\nabla u_{\lambda}|^2 \geq 0
\end{equation}
(in fact, it may happen that $\gamma_{\lambda}'$ does not exist, see Remark \ref{rem:second_derivative_gamma} below). Concerning the term with the bilinear form, we note that $B(u_{\lambda}) = a(x) \, u_{\lambda} - J\ast u_{\lambda}$ so that
\begin{equation}\label{eq:estimates_bilinear_expanded}
\int_{\Omega} \nabla B(u_{\lambda}) \cdot \nabla u_{\lambda} =  \int_{\Omega} a(x)\, |\nabla u_{\lambda}|^2 + \int_{\Omega} \nabla a(x)\, \nabla u_{\lambda}(x)\,u_{\lambda}(x) - \int_{\Omega} (\nabla J \ast u_{\lambda}) \nabla u_{\lambda}.
\end{equation}
The last two terms can be estimated via Cauchy-Schwarz and Young convolutional inequalities:
\begin{equation}
\left|\int_{\Omega} \nabla a(x)\, \nabla u_{\lambda}(x)\,u_{\lambda}(x) \right| \leq \varepsilon\, \int_{\Omega} |\nabla u_{\lambda}|^2 + C(\varepsilon) \, \|\nabla a\|_{\infty}^2  \int_{\Omega} |u_{\lambda}|^2   
\end{equation}
\begin{equation}\label{eq:estimates_convolution_term_Young}
\left|\int_{\Omega} (\nabla J \ast u_{\lambda}) \nabla u_{\lambda} \right| \leq
\varepsilon \int_{\Omega} |\nabla u_{\lambda}|^2 + C(\varepsilon) \left(\int_{\Omega} |\nabla J|\right)^2 \int_{\Omega} |u_{\lambda}|^2 
\end{equation}
where $\varepsilon$ has to be chosen. Collecting \eqref{eq:estimates_mult_by_u}--\eqref{eq:estimates_convolution_term_Young} and recalling that $a(x) > a_{-}$ (see \eqref{eq:ass_a}) we conclude 
$$
\partial_t \int_{\Omega} \frac{u_{\lambda}^2}{2} + (a_{-} - \| \Pi' \|_{\infty} - \varepsilon) \int_{\Omega} |\nabla u_{\lambda}|^2 \leq C(\varepsilon, \nabla J, \nabla a)\, \int_{\Omega} |u_{\lambda}|^2.
$$
We choose $\varepsilon>0$ sufficiently small such that
$$
a_{-} - \| \Pi' \|_{\infty} - \varepsilon > 0
$$
which is possible thanks to \eqref{eq:ass_Lip_Pi}. Since $u_0 \in L^2(\Omega)$, this concludes the proofs of \ref{estim1} and \ref{estim2}. \\

\noindent Now, we multiply \eqref{eq:approx_problem} with $\mu_\lambda$ and integrate in space using Neumann's condition on $\mu_{\lambda}$ to obtain
\begin{equation}\label{eq:energy_for_lambda}
\partial_t \int_{\Omega} \widehat{F}_\lambda(u_{\lambda}) + 
\int_{\Omega} B(u_{\lambda}) \partial_t u_{\lambda} + \int_{\Omega} |\nabla \mu_{\lambda}|^2 = 0
\end{equation}
where $\widehat{F}_\lambda$ is a primitive function of $F_{\lambda}$. Recall that $B(u_\lambda) = u_{\lambda} - J\ast u_{\lambda}$ so that the symmetry of the kernel $J$ implies
$$
\int_{\Omega} B(u_{\lambda}) \partial_t u_{\lambda} = \frac{1}{4} \partial_t \int_\Omega \int_\Omega J(x-y) (u_\lambda(x) - u_\lambda(y))^2 \diff x \diff y.
$$
Therefore, estimate \ref{estim4} follows from \eqref{eq:energy_for_lambda} after integrating in time and using assumption on the initial condition.
\end{proof}
\begin{lemma}\label{lem:L1_est_on_potential}
Suppose that $\{u_{\lambda}\}$ is bounded in $L^{\infty}_t L^2_x$. Then, the sequence $\{\gamma_{\lambda}(u_{\lambda})\}$ is uniformly bounded in $L^2_t L^1_x$.
\end{lemma}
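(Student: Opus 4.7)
\textbf{Proof plan for Lemma~\ref{lem:L1_est_on_potential}.} The plan is to multiply the second equation in \eqref{eq:approx_problem} by $u_{\lambda} - \bar{u}_0$ (where $\bar{u}_0 := \tfrac{1}{|\Omega|}\int_{\Omega} u_0 \diff x$) and exploit a coercivity-type inequality for the Yosida approximation of $\gamma$. The crucial auxiliary fact I would first establish is the following: there exist constants $c_1>0$ and $c_2\ge 0$, independent of $\lambda$, such that for all $r\in\R$
\begin{equation*}
\gamma_{\lambda}(r)\,(r - \bar{u}_0) \;\geq\; c_1\,|\gamma_{\lambda}(r)| - c_2.
\end{equation*}
This inequality is a direct consequence of \ref{ass:H4}: since $\bar{u}_0\in(m_-,m_+)$ with $m_\pm\in\mbox{dom}(\gamma)$, we can pick $\delta>0$ so that $[\bar{u}_0-\delta,\bar{u}_0+\delta]\subset(m_-,m_+)\subset\mbox{dom}(\gamma)$ (using Remark~\ref{rem:all_the_interval_belongs}), and by \eqref{eq:convergence_to_the_minimal_element} the values $\gamma_\lambda(\bar{u}_0\pm\delta)$ are bounded uniformly in $\lambda$ by $|\gamma_0(\bar u_0\pm\delta)|$. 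Then the monotonicity of $\gamma_\lambda$ handles the three cases $r\ge \bar u_0+\delta$, $r\le \bar u_0-\delta$, and $r\in[\bar u_0-\delta,\bar u_0+\delta]$ (in the last case $\gamma_\lambda(r)$ is itself uniformly bounded).

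Next, I would test the chemical potential identity $\mu_{\lambda} = B(u_{\lambda}) + \gamma_{\lambda}(u_{\lambda}) + \Pi(u_{\lambda})$ against $u_{\lambda}(t) - \bar{u}_0$ at a.e.\ $t$. Mass conservation (which follows by integrating $\partial_t u_\lambda=\Delta\mu_\lambda$ in space and using the Neumann condition) gives $\int_{\Omega}(u_{\lambda}-\bar{u}_0)\diff x=0$, so the left-hand side equals $\int_{\Omega}(\mu_{\lambda}-\bar{\mu}_{\lambda})(u_{\lambda}-\bar{u}_0)\diff x$ and can be bounded, via Cauchy--Schwarz and Poincar\'e--Wirtinger, by $C\|\nabla\mu_{\lambda}\|_{L^2_x}\|u_{\lambda}\|_{L^2_x}$. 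The $B$-term satisfies $\int_{\Omega}B(u_{\lambda})(u_{\lambda}-\bar{u}_0)\diff x = \int_{\Omega}B(u_{\lambda})u_{\lambda}\diff x$ by Lemma~\ref{lem:3props_on_nonloca}\ref{prop2}, and Lemma~\ref{lem:3props_on_nonloca}\ref{prop3} together with $J\ge 0$ and $a\in L^\infty$ give the bound $C\|u_\lambda\|_{L^2_x}^2$. The $\Pi$-term is handled by the Lipschitz estimate $|\Pi(u)|\le |\Pi(0)|+\|\Pi'\|_\infty|u|$.

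Collecting these bounds with the coercivity inequality yields the pointwise-in-time estimate
\begin{equation*}
c_1\,\|\gamma_{\lambda}(u_{\lambda}(t))\|_{L^1_x}
\;\leq\; c_2|\Omega| + C\|\nabla\mu_{\lambda}(t)\|_{L^2_x}\|u_{\lambda}(t)\|_{L^2_x} + C\bigl(1+\|u_{\lambda}(t)\|_{L^2_x}^2\bigr).
\end{equation*}
Using the hypothesis $\{u_{\lambda}\}\subset L^{\infty}_tL^2_x$ (proved in Lemma~\ref{lem:part1_of_unif_estimates}\ref{estim1}), the right-hand side is dominated by $C(1+\|\nabla\mu_{\lambda}(t)\|_{L^2_x})$. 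Squaring in $t$, integrating over $(0,T)$, and invoking the $L^2_tL^2_x$ bound on $\nabla\mu_{\lambda}$ from Lemma~\ref{lem:part1_of_unif_estimates}\ref{estim4} delivers the desired bound of $\{\gamma_{\lambda}(u_{\lambda})\}$ in $L^2_tL^1_x$.

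The main (and essentially the only) obstacle is the uniform-in-$\lambda$ coercivity inequality in the first paragraph; everything else is manipulation of the equation together with standard Poincar\'e--Wirtinger and Young-type estimates. The inequality itself is delicate because neither $m_\pm$ need belong to the interior of $D(\widehat\gamma)$ nor need $\gamma_\lambda(m_\pm)$ have a definite sign; the uniform control \eqref{eq:convergence_to_the_minimal_element} of $|\gamma_\lambda(m_\pm)|$ by $|\gamma_0(m_\pm)|$ is what makes the argument go through.
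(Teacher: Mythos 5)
Your proposal is correct and follows essentially the same strategy as the paper: isolate $\int_\Omega\gamma_\lambda(u_\lambda)(u_\lambda-\overline{u_0})$ by pairing the chemical potential against $u_\lambda-\overline{u_0}$, and control it from below by a uniform-in-$\lambda$ coercivity inequality of the form $\gamma_\lambda(r)(r-\overline{u_0})\ge M_1|\gamma_\lambda(r)|+M_2$, which is exactly \eqref{eq:estimate_modulus_gamma}. The one visible difference is how you control $\int_\Omega\mu_\lambda(u_\lambda-\overline{u_0})$: the paper tests the evolution equation with $(-\Delta)^{-1}(u_\lambda-\overline{u_0})$ and bounds the resulting $H^{-1}\times H^1$ duality pairing using \ref{estim1} and \ref{estim4}, while you invoke mass conservation and Poincar\'e--Wirtinger to bound the same quantity by $C\|\nabla\mu_\lambda\|_{L^2_x}\|u_\lambda\|_{L^2_x}$. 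Since $\|\partial_t u_\lambda\|_{H^{-1}}=\|\nabla\mu_\lambda\|_{L^2}$, these two routes are really the same estimate written in dual form, and both consume exactly the same prior bounds.

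One detail worth making explicit in your coercivity argument: the claim that ``monotonicity handles the outer cases'' silently relies on $0\in\gamma(0)$, which forces $\gamma_\lambda(0)=0$ and hence $\gamma_\lambda\ge 0$ on $[0,\infty)$ and $\gamma_\lambda\le 0$ on $(-\infty,0]$. Without this sign information, on the ray $r\ge\overline{u_0}+\delta$ one only knows $\gamma_\lambda(r)\ge\gamma_\lambda(\overline{u_0}+\delta)\ge -|\gamma_0(\overline{u_0}+\delta)|$, and if $\gamma_\lambda(r)$ could be negative for arbitrarily large $r$, the product $\gamma_\lambda(r)(r-\overline{u_0})$ would be unbounded below. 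The hypothesis $0\in\gamma(0)$ from \ref{ass:H3} rules this out, and the paper uses it explicitly via \eqref{eq:gamma_lambda_at_0}. Your treatment of the $B$-term (bounding $\int_\Omega B(u_\lambda)u_\lambda$ above by $C\|u_\lambda\|_{L^2}^2$ rather than dropping it as nonnegative) is sound but slightly less economical; both work.
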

\begin{proof}
Given $f \in L^2(\Omega)$ with $\int_{\Omega} f = 0$, there exists unique $w \in H^1(\Omega)$ such that
\begin{equation}\label{eq:Laplace_operator}
-\Delta w = f \mbox{ on } \Omega, \qquad \frac{\partial}{\partial {\bf n}} w = 0.
\end{equation} 
We write $w = (-\Delta)^{-1} f$. The operator $(-\Delta)^{-1}$ is nonnegative and self-adjoint. Therefore, it has a self-adjoint square root that we denote with $(-\Delta)^{-1/2}$.\\

\noindent We multiply \eqref{eq:approx_problem} with $(-\Delta)^{-1}(u_{\lambda} - \overline{u_{0}})$ where $\overline{u_0}$ is the average of $u_0$ to get 
\begin{equation}\label{eq:without_integrating_Delta}
\langle \partial_tu_{\lambda}, (-\Delta)^{-1}(u_{\lambda} - \overline{u_{0}})\rangle_{(H^{-1}, H^1)} + \int_{\Omega} \mu_{\lambda} (u_{\lambda} - \overline{u_{0}}) = 0.
\end{equation}
Thanks to \ref{estim4} and equation \eqref{eq:approx_problem}, $\{\partial_tu_{\lambda}\}$ is uniformly bounded in $L^2_t H^{-1}_x$. Moreover, thanks to \ref{estim1}, $ (-\Delta)^{-1}(u_{\lambda} - \overline{u_{0}})$ is uniformly bounded in $L^{\infty}_t H^1_x$. It follows that the expression $\langle \partial_tu_{\lambda}, (-\Delta)^{-1}(u_{\lambda} - \overline{u_{0}})\rangle_{(H^{-1}, H^1)}$ is uniformly bounded in $L^2(0,T)$. The last integral in \eqref{eq:without_integrating_Delta} can be split into three terms:
$$
\int_{\Omega} \mu_{\lambda} (u_{\lambda} - \overline{u_{0}}) = \int_{\Omega} B(u_{\lambda}) \, (u_{\lambda} - \overline{u_{0}})
+ 
\int_{\Omega} \Pi(u_{\lambda})\, (u_{\lambda} - \overline{u_{0}})
+
\int_{\Omega} \gamma_{\lambda}(u_{\lambda}) (u_{\lambda} - \overline{u_{0}})
$$
which can be studied separately. Clearly,
$$
\int_{\Omega} B(u_{\lambda}) \, (u_{\lambda} - \overline{u_{0}}) = \frac{1}{2} \,\int_{\Omega} \int_{\Omega} J(x-y) |u_{\lambda}(x) - u_{\lambda}(y)|^2 \diff y \diff x \geq 0
$$
because $\int_{\Omega} B(u_{\lambda}) \, \overline{u_0} = 0$. For the term with $\Pi(u_{\lambda})$ we estimate
\begin{equation}\label{eq:Pi_Lipschitz_estimate}
\left|\Pi(u_{\lambda}) \right| \leq | \Pi(u_{\lambda}) - \Pi(0) | + |\Pi(0)| \leq \|\Pi'\|_{\infty} |u_{\lambda}| + |\Pi(0)|,
\end{equation}
so that $\int_{\Omega} \Pi(u_{\lambda})\, (u_{\lambda} - \overline{u_{0}})$ is bounded in $L^{\infty}(0,T)$ due to \ref{estim1}. Finally, we have to estimate the term with $\gamma_{\lambda}(u_{\lambda})$. For this, we prove that there exist constants $M_1$, $M_2$ depending only on $\overline{u_{0}}$ such that
\begin{equation}\label{eq:estimate_modulus_gamma}
M_1 |\gamma_{\lambda}(r)| + M_2 \leq \gamma_{\lambda}(r) (r - \overline{u_{0}}).
\end{equation}
If \eqref{eq:estimate_modulus_gamma} is proved, then the proof of Lemma \ref{lem:L1_est_on_potential} is completed because the rest of the terms are either nonnegative or bounded in $L^2(0,T)$. \\

\noindent In order to prove~\eqref{eq:estimate_modulus_gamma}, we follow the argument from \cite[p. 908]{gil-mir-sch}. We need the assumption that there exists $m_- < 0 < m_+$ such that $m_-, m_+ \in \mbox{dom}(\gamma)$ and $\overline{u_{0}} \in (m_-, m_+)$ as in \eqref{eq:condition_average_init_cond}. We define 
$$
\delta_0 := \min(\overline{u_{0}}- m_-, m_+ - \overline{u_{0}}).
$$

\noindent \underline{\emph{Case $r \geq m_+$ or $r \leq m_-$.}} We only consider $r \geq m_+$ as the second case is similar. Due to \eqref{eq:gamma_lambda_at_0}, we have $\gamma_{\lambda}(0) = 0$. As $\gamma_{\lambda} = ( \widehat{\gamma_{\lambda}})'$ is nondecreasing (this follows from convexity of $\widehat{\gamma_{\lambda}}$, cf. \eqref{eq:Yosida_approx}), $\gamma_{\lambda}(r) \geq \gamma_{\lambda}(0) = 0$. Since $r-\overline{u_{0}} \geq \delta_0$, estimate \eqref{eq:estimate_modulus_gamma} is satisfied with $M_1=\delta_0$ and any nonpositive $M_2$.\\

\noindent \underline{\emph{Case $r \in (m_-, m_+)$.}} Note that, by Remark \ref{rem:all_the_interval_belongs}, the interval $[m_-, m_+]$ belongs to $\mbox{dom}(\gamma)$. Since $-\gamma_{\lambda}(r)\, (r-\overline{u_0}) \leq |\gamma_{\lambda}(r)|\, (m_+ - m_-)$ we can simply estimate
$$
\delta_0 \, |\gamma_{\lambda}(r)| - \gamma_{\lambda}(r) (r - \overline{u_{0}}) \leq (\delta_0 + m_+ - m_-)\, |\gamma_{\lambda}(r)| \leq (\delta_0 + m_+ - m_-)\, \sup_{r \in [m_-, m_+]} |\gamma_0(r)|,
$$
where we used monotone convergence of Yosida approximations to the minimal value $\gamma_0$, see \eqref{eq:convergence_to_the_minimal_element}. Of course, by monotonicity, the supremum above can be estimated only in terms of $|\gamma_0(m_-)|$ and $|\gamma_0(m_+)|$, so the proof is concluded.   
\end{proof}

\begin{proof}[Proof of Theorem \ref{thm:uniform_estimates}]

\noindent It remains to establish \ref{estim3} and \ref{estim5}. Concerning \ref{estim3}, by Poincaré inequality, it is sufficient to prove that the average $\int_{\Omega} \mu_{\lambda}$ is bounded in $L^2(0,T)$. By definition of $\mu_{\lambda}$, we have
$$
\int_{\Omega} \mu_{\lambda} = \int_{\Omega} B(u_{\lambda}) + \int_{\Omega} \gamma_{\lambda}(u_{\lambda}) + \int_{\Omega} \Pi(u_{\lambda}). 
$$
Note that $\int_{\Omega} B(u_{\lambda})  = 0$ by \ref{prop2} in Lemma \ref{lem:3props_on_nonloca}, $\int_{\Omega} \gamma_{\lambda}(u_{\lambda})$ is bounded in $L^2(0,T)$ by Lemma \ref{lem:L1_est_on_potential} while for the last term we use \eqref{eq:Pi_Lipschitz_estimate} and apply estimate \ref{estim1}. The conclusion follows.\\

\noindent Finally, we prove \ref{estim5}. For this, using the formula for $\mu_{\lambda}$, we get:
\begin{equation}\label{eq:formula_gamma_lambda}
\gamma_{\lambda}(u_{\lambda}) = \mu_{\lambda} - (J\ast1) u_{\lambda} + J\ast u_{\lambda} - \Pi(u_{\lambda}).
\end{equation}
We want to prove that each term appearing on the right-hand side (RHS) of \eqref{eq:formula_gamma_lambda} is bounded in $L^2_t L^2_x$. For $\mu_{\lambda}$, this follows from \ref{estim3} while for $(J\ast1) u_{\lambda}$, this follows from \ref{estim1}. For the term $J\ast u_{\lambda}$ we apply Young's convolutional inequality (note that $J$ does not depend on time) to deduce
$$
\|J \ast u_{\lambda}\|_{L^2_t L^2_x} \leq \|J\|_{L^1_x} \|u_{\lambda}\|_{L^2_t L^2_x}
$$
which is bounded by \ref{estim1}. Finally, for $\Pi(u_{\lambda})$ we apply \eqref{eq:Pi_Lipschitz_estimate} and \ref{estim1} once again.
\end{proof}

\begin{remark}\label{rem:second_derivative_gamma}
In the proof of Lemma \ref{lem:part1_of_unif_estimates}, we used the existence of $\gamma_{\lambda}'$ which may not be the case (we only know that $\gamma_{\lambda}$ is continuous for fixed $\lambda>0$). However, in fact, we only used the sign which can be deduced from convexity by a suitable approximation scheme. Namely, let $\gamma_{\lambda}^{\varepsilon}$ be a usual mollification of $\gamma_{\lambda}$ and let $u^{\varepsilon}_{\lambda}$ be solution to
\begin{equation}\label{eq:approx_problem_eps}
\begin{split}
    \partial_t u^{\varepsilon}_{\lambda} &= \Delta \mu^{\varepsilon}_{\lambda},\mbox{ in }  \Omega\\
    \mu^{\varepsilon}_{\lambda} &= B(u^{\varepsilon}_{\lambda}) + \gamma^{\varepsilon}_{\lambda}(u^{\varepsilon}_{\lambda}) + \Pi(u^{\varepsilon}_{\lambda}),\mbox{ in }  \Omega\\
    \frac{\partial \mu^{\varepsilon}_{\lambda}}{\partial \textbf{n}} &= 0, \mbox{ on } \partial \Omega.
\end{split}
\end{equation}
Note that since $\gamma_{\lambda}$ is nondecreasing, the same holds for $\gamma^{\varepsilon}_{\lambda}$ so that $\gamma^{\varepsilon}_{\lambda}\,' \geq 0$. Arguing as in \eqref{eq:estimates_mult_by_u}--\eqref{eq:estimates_gamma'_geq0}, we obtain the inequality
$$
\partial_t \int_{\Omega} \left(u^{\varepsilon}_{\lambda}\right)^2 \leq 
-\int_{\Omega} \Pi'(u^{\varepsilon}_\lambda) |\nabla u^{\varepsilon}_{\lambda}|^2  - \int_{\Omega} \nabla B(u^{\varepsilon}_{\lambda}) \cdot \nabla u^{\varepsilon}_{\lambda}.
$$
As in Lemma \ref{lem:part1_of_unif_estimates}, we get the following uniform bounds: $\{u^{\varepsilon}_{\lambda}\}$ in $L^{\infty}_t L^2_x$, $\{\nabla u^{\varepsilon}_{\lambda}\}$ in $L^{2}_t L^2_x$ and $\{\nabla \mu^{\varepsilon}_{\lambda}\}$ in $L^{2}_t L^2_x$. It remains to pass to the limit $\varepsilon \to 0$ which is simple because \eqref{eq:approx_problem} admits a unique solution (it is nondegenerate Cahn-Hilliard with regular potential) so that we can identify the limit. 
\end{remark}


\section{Proof of the main result Theorem~\ref{thm:main}}
\label{sec:proof}

We consider $u_{\lambda_1}$, $u_{\lambda_2}$ to be two solutions of \eqref{eq:approx_problem} with potentials $\mu_{\lambda_1}$, $\mu_{\lambda_2}$ respectively. We want to prove
$$
\|u_{\lambda_1} - u_{\lambda_2}\|_{L^2(0,T; L^2(\Omega))}^2 \leq C(\lambda_1 + \lambda_2).
$$
To this end, we write $\overline{u}= u_{\lambda_1} - u_{\lambda_2}$, $\overline{\mu} = \mu_{\lambda_1} - \mu_{\lambda_2}$. We have
\begin{equation}\label{eq:equation_for_difference}
\partial_t \overline{u} = \Delta \overline{\mu}.
\end{equation}
Since the average of $\overline{u}$ is equal to $0$, we can test \eqref{eq:equation_for_difference} with $(-\Delta)^{-1} \overline{u}$ as in the proof of Lemma \ref{lem:L1_est_on_potential} to obtain
$$
\frac{1}{2} \, \partial_t  \int_{\Omega} \left|(-\Delta)^{-1/2} \overline{u}\right|^{2} + \int_{\Omega} \overline{\mu} \, \overline{u} = 0.
$$
Let $t \in [0,T]$. We integrate in time from $[0,t]$ to have
\begin{equation}\label{eq:H-1_energy_equality}
\frac{1}{2} \,  \int_{\Omega} \left|(-\Delta)^{-1/2} \overline{u}(t,\cdot)\right|^{2} + \int_0^t \int_{\Omega} \overline{\mu} \, \overline{u} = 0.
\end{equation}
The last term can be written as
$$
\int_0^t \int_{\Omega} \overline{\mu} \, \overline{u} = 
\int_0^t \int_{\Omega} B(\overline{u}) \, \overline{u} 
+ \int_0^t \int_{\Omega} (\gamma_{\lambda_1}(u_{\lambda_1}) - \gamma_{\lambda_2}(u_{\lambda_2})) \, \overline{u}
+ \int_0^t \int_{\Omega} (\Pi(u_{\lambda_1}) - \Pi(u_{\lambda_2})) \, \overline{u} 
$$
as $B$ is linear. We analyze the three terms appearing on the (RHS) separately.\\

\noindent \underline{Term $\int_0^t \int_{\Omega} B(\overline{u}) \, \overline{u}$.} We note that $B$ is of the form
\begin{equation}\label{eq:the_form_of_B}
Bu = a(x)\, u - Iu
\end{equation}
where $I$ is an integral Hilbert-Schmidt operator and self-adjoint because we assume that the kernel $J \in L^2_{\text{loc}}(\R^d)$. Therefore, $I$ is compact, and has representation
$$
If = \sum_{i=0}^{\infty} \langle If, e_i \rangle \, e_i
$$
where the orthonormal basis $\{e_i\}$ of $L^2(\Omega)$ is chosen as eigenvalues of $(-\Delta)$ operator as in \eqref{eq:Laplace_operator} and $\{\lambda_i\}$ are the corresponding eigenvalues. Note that $\lambda_0 = 0$, $e_0 = \text{const}$ and for all $i \geq 1$ we have $\int_{\Omega} e_i \diff x = 0$ and $(-\Delta)^{-1}e_i = \frac{1}{\lambda_i} e_i$.\\

As $I$ is Hilbert-Schmidt, $\sum_{i=1}^{\infty} \|Ie_i\|_{L^2(\Omega)}^2 <\infty$. Therefore, we conclude that there exists a sequence $
\{I_k\}$ of finite dimensional and self-adjoint operators defined by the formula
\begin{equation}\label{eq:finite_dim_operator_T}
I_kf = \sum_{i=0}^{k}  \langle If, e_i \rangle \,  e_i. 
\end{equation}
Moreover, $\|I_k - I\| \to 0$ in the operator norm which is a simple consequence of the summability $\sum_{i=1}^{\infty} \|Ie_i\|^2_{L^2(\Omega)} < \infty$.
Now, we fix $k \in \mathbb{N}$. We introduce the space
$$
L^2_0(\Omega) = \left\{f \in L^2(\Omega): \int_{\Omega} f(x) \diff x = 0\right\}
$$
and we use the orthogonal decomposition
$$
L^2_0(\Omega) = A_k \oplus B_k, \qquad \qquad \overline{u} = \overline{u}_A + \overline{u}_B, \qquad \qquad \overline{u}_A \in A_k,\qquad \overline{u}_B \in B_k,
$$
where $A_k = \mbox{span}(e_1,..., e_k)$ and $B_k = (e_{k+1}, e_{k+2},...)$. We remark that we skip the vector $e_0$ because we restrict ourselves to functions in $L^2_0(\Omega)$.\\

We write
$$
\int_0^t \int_{\Omega} I(\overline{u}) \, \overline{u} = \int_0^t\int_{\Omega} (I(\overline{u}) - I_k(\overline{u}))\, \overline{u} +
\int_0^t\int_{\Omega} I_k(\overline{u})\, \overline{u}.
$$
The first term is estimated by triangle inequality
$$
\int_0^t \int_{\Omega} (I(\overline{u}) - I_k(\overline{u}))\, \overline{u} \leq  \|I- I_k\| \, \|\overline{u}\|_{L^2(0,t; L^2(\Omega))}^2.
$$
For the second term, we use the decomposition $\overline{u} = \overline{u}_A + \overline{u}_B$ and observe that
$$
\int_0^t \int_{\Omega} I_k(\overline{u})\, \overline{u}_B = 0
$$
because $I_k$ has values in $A_k$. Therefore, 
$$
\int_0^t\int_{\Omega} I_k(\overline{u})\, \overline{u} = 
\int_0^t \int_{\Omega} I_k(\overline{u})\, \overline{u}_A \leq \varepsilon\, \|I\|^2\, \|\overline{u}\|^2_{L^2(0,t; L^2(\Omega))} + C(\varepsilon)\,\|\overline{u}_A\|_{L^2(0,t; L^2(\Omega))}^2,  
$$
because $\|I_k\| \leq \|I\|$. Now, on the space $A_k$ there are two norms: the usual $L^2$ norm and the $H^{-1}$ norm defined as $\|u\|_{H^{-1}(\Omega)} = \|(-\Delta)^{-1/2} u\|_{L^2(\Omega)}$ (this is a norm due to the additional constraint that the average is 0). As $A_k$ is of finite dimension, there exists a constant $C(k)$ (which depends on $k$ and blows up when $k\to \infty$) such that
$$
\|\overline{u}_A\|_{L^2(0,t; L^2(\Omega))}^2 \leq C(k) \, \| (-\Delta)^{-1/2} \overline{u}_{A} \|^2_{L^2(0,t; L^2(\Omega))} \leq C(k) \, \| (-\Delta)^{-1/2} \overline{u} \|^2_{L^2(0,t; L^2(\Omega))},
$$
where the last inequality follows from the fact that $\{e_i\}$ are the eigenvectors of $-\Delta$ operator and $(-\Delta)^{-1/2}$ is self-adjoint so that
$$
\| (-\Delta)^{-1/2} \overline{u}_{A} \|^2_{L^2(0,t; L^2(\Omega))} + \| (-\Delta)^{-1/2} \overline{u}_{B} \|^2_{L^2(0,t; L^2(\Omega))} = 
\| (-\Delta)^{-1/2} \overline{u} \|^2_{L^2(0,t; L^2(\Omega))}.
$$
The conclusion is that
\begin{multline*}
\int_0^t \int_{\Omega} I(\overline{u}) \, \overline{u}
    \leq 
    \\
    \leq \|I- I_k\| \, \|\overline{u}\|_{L^2(0,t; L^2(\Omega))}^2 + \varepsilon\, \|I\|^2\, \|\overline{u}\|^2_{L^2(0,t; L^2(\Omega))} + C(\varepsilon,k)\, \| (-\Delta)^{-1/2} \overline{u} \|^2_{L^2(0,t; L^2(\Omega))}. 
\end{multline*}
Taking into account the whole form of $B$ as in \eqref{eq:the_form_of_B} and that $a(x) > a_{-}$ as in \eqref{eq:ass_a}, we conclude that
\begin{equation}\label{eq:final_estimate_nonlocal}
\begin{split}
\int_0^t \int_{\Omega} &B(\overline{u}) \, \overline{u} \geq \\ 
&\geq 
\Bigl(a_{-} - \varepsilon\, \|I\|^2 - \|I-I_k\|\Bigr) \, \|\overline{u}\|_{L^2(0,t; L^2(\Omega))}^2  - C(\varepsilon,k)\,\| (-\Delta)^{-1/2} \overline{u} \|^2_{L^2(0,t; L^2(\Omega))},
\end{split}
\end{equation}
where $\varepsilon$ and $k$ has to be chosen appropriately.\\

\underline{Term $\int_0^t \int_{\Omega} (\gamma_{\lambda_1}(u_{\lambda_1}) - \gamma_{\lambda_2}(u_{\lambda_2})) \, \overline{u}$.} We use the  argument in~\cite{CG00}. Namely, directly by definition of Yosida approximation \eqref{eq:def_gamma_lambda}, we have $J_{\lambda} u = u - \lambda \, \gamma_{\lambda}(u)$ so that
\begin{multline*}
- \int_0^t \int_{\Omega} (\gamma_{\lambda_1}(u_{\lambda_1}) - \gamma_{\lambda_2}(u_{\lambda_2})) \, \overline{u} =
- \int_0^t\int_{\Omega} (\gamma_{\lambda_1}(u_{\lambda_1}) - \gamma_{\lambda_2}(u_{\lambda_2})) \, (J_{\lambda_1}(u_{\lambda_1}) -  J_{\lambda_2}(u_{\lambda_2}))\\
- \int_0^t \int_{\Omega} (\gamma_{\lambda_1}(u_{\lambda_1}) - \gamma_{\lambda_2}(u_{\lambda_2})) \, ( \lambda_1 \gamma_{\lambda_1}(u_{\lambda_1}) - 
\lambda_2 \gamma_{\lambda_2}(u_{\lambda_2})).
\end{multline*}
The first term appearing on the (RHS) is nonpositive by monotonicity of $\gamma$ and the fact that $\gamma_{\lambda}(u) \in \gamma(J_{\lambda}(u))$ cf. \eqref{eq:fact_about_gamma_lambda_in_the_resolvent}. Then, the remaining part can be estimated by Holder inequality:
\begin{align*}
- \int_0^t \int_{\Omega} (&\gamma_{\lambda_1}(u_{\lambda_1}) - \gamma_{\lambda_2}(u_{\lambda_2})) \, ( \lambda_1 \gamma_{\lambda_1}(u_{\lambda_1}) -
\lambda_2 \gamma_{\lambda_2}(u_{\lambda_2})) \leq \\
&\leq - \lambda_1\|\gamma_{\lambda_1}(u_{\lambda_1})\|_{L^2_{t,x}}^2 
- \lambda_2\|\gamma_{\lambda_2}(u_{\lambda_2})\|_{L^2_{t,x}}^2 + (\lambda_1 + \lambda_2) \int_0^t \int_{\Omega} \gamma_{\lambda_1}(u_{\lambda_1}) \,
\gamma_{\lambda_2}(u_{\lambda_2}) \\
&\leq (\lambda_1 + \lambda_2) \, \|\gamma_{\lambda_1}(u_{\lambda_1})\|_{L^2_{t,x}} \, 
\|\gamma_{\lambda_2}(u_{\lambda_2}) \|_{L^2_{t,x}} \leq C(\lambda_1 + \lambda_2). \phantom{\int_{\Omega}}
\end{align*}
Using \ref{estim5} in Theorem \ref{thm:uniform_estimates}, we conclude
\begin{equation}\label{eq:final_estimate_gamma}
- \int_0^t \int_{\Omega} (\gamma_{\lambda_1}(u_{\lambda_1}) - \gamma_{\lambda_2}(u_{\lambda_2})) \, ( \lambda_1 \gamma_{\lambda_1}(u_{\lambda_1}) -
\lambda_2 \gamma_{\lambda_2}(u_{\lambda_2})) \leq C(\lambda_1 + \lambda_2).
\end{equation}

\underline{Term $\int_0^t \int_{\Omega} (\Pi(u_{\lambda_1}) - \Pi(u_{\lambda_2}))\, \overline{u}$.}
Here, we simply estimate
\begin{equation}\label{eq:final_estimate_pi}
-\int_0^t \int_{\Omega} (\Pi(u_{\lambda_1}) - \Pi(u_{\lambda_2})) \, \overline{u} \leq \|\Pi'\|_{\infty} \|\overline{u}\|^2_{L^2(0,t; L^2(\Omega))}. 
\end{equation}
\underline{Conclusion of the proof.} Plugging estimates \eqref{eq:final_estimate_nonlocal}, \eqref{eq:final_estimate_gamma}, \eqref{eq:final_estimate_pi} into \eqref{eq:H-1_energy_equality}, we deduce
\begin{multline*}
\frac{1}{2} \,  \int_{\Omega} \left|(-\Delta)^{-1/2} \overline{u}(t,\cdot)\right|^{2} + (a_{-} - \varepsilon\, \|I\| - \|I-I_k\| - \|\Pi'\|_{\infty}) \, \| \overline{u} \|^2_{L^2(0,t; L^2(\Omega))} \leq \\ 
\leq C \,(\lambda_1+\lambda_2) + C(\varepsilon,k)\,\| (-\Delta)^{-1/2} \overline{u} \|^2_{L^2(0,t; L^2(\Omega))}.
\end{multline*}
We choose $\varepsilon$ and $k$ such that 
$$
a_{-} - \varepsilon\, \|I\| - \|I-I_k\| - \|\Pi'\|_{\infty} > 0.
$$
Then, by Gronwall's inequality, we obtain a first control on $\int_{\Omega} \left|(-\Delta)^{-1/2} \overline{u}(t,\cdot)\right|^{2}$ for all $t \in [0,T]$ and then
$$
\| u_{\lambda_1} - u_{\lambda_2} \|^2_{L^2(0,t; L^2(\Omega))} \leq C\, (\lambda_1 + \lambda_2).
$$
It follows that the sequence $\{u_{\lambda}\}$ is a Cauchy sequence in $L^2((0,T)\times\Omega)$ and we know from \cite[Section 4.3]{DST} that the limit solves \eqref{eq:nlch} in the weak sense. Hence, we may pass to the limit $\lambda_1 \to 0$ and obtain
$$
\| u_{\lambda_2} - u \|^2_{L^2(0,t; L^2(\Omega))} \leq C\, \lambda_2
$$
which completes the proof of Theorem~\ref{thm:main}.

\section*{Acknowledgements}

Piotr Gwiazda was supported by National Science Center, Poland through project no. 2018/30/M/ST1/00423. Jakub Skrzeczkowski was supported by National Science Center, Poland through project no. 2019/35/N/ST1/03459 and by the Advanced Grant Nonlocal-CPD (Nonlocal PDEs for Complex Particle Dynamics: Phase Transitions, Patterns and Synchronization) of the European Research Council Executive Agency (ERC) under the European Union’s Horizon 2020 research and innovation program (grant agreement No. 883363). The authors acknowledge the financial support by the University of Graz.

\bibliographystyle{abbrv}
\bibliography{ref}
\end{document}